\newcolumntype{L}{>{\raggedright\arraybackslash}X}
\numberwithin{equation}{section}
\newtheorem{theorem}{Theorem}[section]
\newtheorem{lemma}[theorem]{Lemma}
\newtheorem{remark}[theorem]{Remark}
\newtheorem{example}[theorem]{Example}
\newtheorem{defin}[theorem]{Definition}
\newcommand{\R}{\mathbb{R}}
\newcommand{\N}{\mathbb{N}}
\newcommand*{\eps}{\varepsilon}
\newcommand*{\lam}{\lambda}
\newcommand*{\sig}{\sigma}
\renewcommand{\leq}{\leqslant}
\renewcommand{\geq}{\geqslant}
\DeclareMathOperator{\half}{\frac{1}{2}}
\DeclareMathOperator{\bin}{\nu}
\def\blfootnote{\gdef\@thefnmark{}\@footnotetext}
\begin{document}

\title{Acceleration by Stepsize Hedging II: \\ Silver Stepsize Schedule for Smooth Convex Optimization}

	 \author{
	 	Jason M. Altschuler
	 	\\	UPenn \\	\texttt{alts@upenn.edu}
	 	\and
	 	Pablo A. Parrilo \\
	 	LIDS - MIT \\	\texttt{parrilo@mit.edu}
	 }
	 \date{}
	\maketitle

\begin{abstract}	
    We provide a concise, self-contained proof that the Silver Stepsize Schedule proposed in Part~I~\citep{alt23hedging1} directly applies to smooth (non-strongly) convex optimization. Specifically, we show that with these stepsizes, gradient descent computes an $\eps$-minimizer in $O(\eps^{-\log_{\rho} 2}) = O(\eps^{-0.7864})$ iterations, where $\rho = 1+\sqrt{2}$ is the silver ratio. This is intermediate between the textbook unaccelerated rate $O(\eps^{-1})$ and the accelerated rate $O(\eps^{-1/2})$ due to Nesterov in 1983. The Silver Stepsize Schedule 
    is a simple explicit fractal: the $i$-th stepsize is $1 + \rho^{\nu(i)-1}$ where $\nu(i)$ is the $2$-adic valuation of~$i$. The design and analysis are conceptually identical to the strongly convex setting in~\citep{alt23hedging1}, but simplify remarkably in this specific setting.
\end{abstract}

\section{Introduction}\label{sec:intro} 

We revisit the classical problem of smooth convex optimization: solve $\min_{x \in \R^d} f(x)$ where $f$ is convex and $M$-smooth (i.e., its gradient is $M$-Lipschitz). A celebrated result is that with a prudent choice of stepsizes $\{\alpha_t\}$, the gradient descent algorithm (GD) 
\begin{align}
	x_{t+1} = x_t - \frac{\alpha_t}{M} \nabla f(x_t) 
\end{align}
solves such a convex optimization problem to arbitrary accuracy from any initialization $x_0$. How quickly does GD converge? The mainstream approach (see e.g., the textbooks \citep{BertsekasNonlinear,Nocedal,nesterov-survey,vishnoi2021algorithms,bubeck-book,BNO,lan2020first,bertsekas2015convex} among many others) is to use a constant stepsize schedule $\alpha_t \equiv \bar{\alpha} \in (0,2)$ since this ensures
\begin{align}
	f(x_n) - f^* \leq \frac{cM\|x_0 - x^*\|^2}{n} 
	\label{eq:rate-std}
\end{align}
where $x^*$ denotes any minimizer of $f$, $f^* := f(x^*)$ denotes the corresponding minimal value, and $c$ is a small constant, e.g., $c = \tfrac{1}{4}$ for $\bar{\alpha} = 1$~\citep{DT14}.

\par The main question posed in Part I~\citep{alt23hedging1} was: can we accelerate the convergence of GD without changing the algorithm---just by judiciously choosing the stepsizes? Here we continue to investigate this question, now in the setting of smooth convex optimization. Note that this is markedly different from classical approaches to acceleration---starting from Nesterov's seminal result of 1983~\citep{nesterov-agd}, those approaches modify the basic GD algorithm by adding momentum, internal dynamics, or other additional building blocks beyond just changing the stepsizes. For this reason, we do not discuss that line of work in detail, and instead refer to~\citep{d2021acceleration}
 for a recent survey of this mainstream approach to acceleration, and to~\citep{alt23hedging1} for a full discussion of the relations between these approaches.

\subsection{Contribution}\label{ssec:cont}

\begin{figure}
	\centering
	\includegraphics[width=0.5\linewidth]{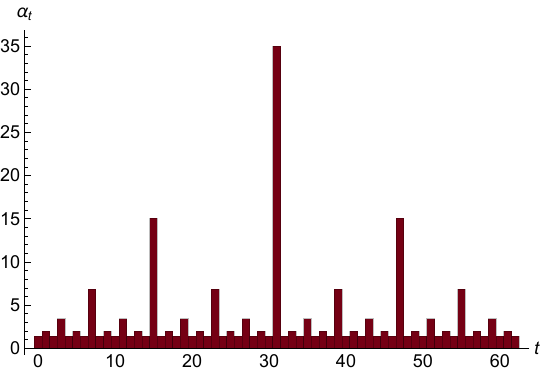}
	\caption{\footnotesize Silver Stepsize schedule $\{\alpha_0,\alpha_1,\alpha_2,\ldots\}$. See~\eqref{eq:steps} for the definition. Only the first $n=63$ values are shown (i.e., $k=8$). The fractal-like stepsizes are non-monotonic and have increasingly large ``spikes'' $\alpha_{2^k-1} = 1+\rho^{k-1}$.}
	\label{fig:silverstepsize}
\end{figure}

This paper provides a concise, self-contained proof that the Silver Stepsize Schedule proposed in Part~I~\citep{alt23hedging1} directly applies to smooth (non-strongly) convex optimization. This leads to faster convergence rates of GD for smooth convex optimization, as already pointed out in~\citep[\S1.1.4]{alt23hedging1}.

\par In this setting, the Silver Stepsize Schedule is particularly simple. For any integer $n = 2^{k}-1$, we recursively construct the schedule $h_{2n+1}$ of length $2n+1$ from the schedule $h_n$ of length $n$ via
\begin{align}
	h_{2n+1} := [h_n, \; 1 + \rho^{k-1}, \; h_n]
	\label{eq:steps-recursive}
\end{align}
where $\rho := 1 + \sqrt{2}$ denotes the silver ratio, and $h_1 := [\sqrt{2}]$. This results in the simple pattern $[\sqrt{2}, \; 2, \; \sqrt{2}, \; 1+\sqrt{2}, \dots]$ as depicted in Figure~\ref{fig:silverstepsize}. This schedule is exactly the Silver Stepsize Schedule from~\citep{alt23hedging1} in the limit that the strong convexity parameter vanishes (see Remark~\ref{rem:steps-limit} for details), and bears similarities to those in~\citep{gupta22,grimmer23,GrimmerShuWang}; see \S\ref{ssec:rel}.

We show that these stepsizes yield an improved convergence rate~\eqref{eq:rate-std} where $\tfrac{c}{n}$ is replaced by
\begin{align}
	r_k := \frac{1}{1 + \sqrt{4\rho^{2k} - 3}} \leq
	\frac{1}{2 \rho^{\log_2 n}}
	= \frac{1}{2n^{\log_2 \rho}}
	\approx \frac{1}{2 n^{1.2716}}
	\,.
	\label{eq:r-asymptotics}
\end{align}
This bound gives the correct asymptotic scaling of $r_k$ since the inequality is asymptotically tight.

\begin{theorem}[Main result]\label{thm:main}
	For any horizon $n = 2^k - 1$, any dimension $d$, any $M$-smooth convex function $f : \R^d \to \R$, and any initialization $x_0 \in \R^d$, 
	\begin{align}
		f(x_n) - f^* \leq r_k\, M\|x_0 - x^*\|^2 \,,
	\end{align}
	where $x^*$ denotes any minimizer of $f$, and $x_n$ denotes the output of $n$ steps of GD using the Silver Stepsize Schedule. In particular, in order to achieve error $f(x_n) - f^* \leq \eps$, it suffices to run GD for
	\begin{align*}
		n
		\leq
		\left( \frac{M \|x_0 - x^*\|^2}{2 \eps} \right)^{\log_{\rho} 2} 
		\approx
		\left( \frac{M\|x_0 - x^*\|2}{2 \eps} \right)^{0.7864}
		\;\; \text{iterations}.
	\end{align*}
\end{theorem}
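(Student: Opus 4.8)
The plan is to first rescale so that $M=1$ (replace $f$ by $f/M$), so that GD reads $x_{t+1}=x_t-\alpha_t\nabla f(x_t)$ and smoothness becomes $1$-Lipschitzness of $\nabla f$, and then prove the estimate by induction on $k$, mirroring the recursive construction \eqref{eq:steps-recursive}. The only analytic input would be the standard interpolation (co-coercivity) inequalities for $1$-smooth convex functions: $f(x)\ge f(y)+\angs{\nabla f(y)}{x-y}+\tfrac12\|\nabla f(x)-\nabla f(y)\|^2$ for all $x,y$, and in particular, specializing $y=x^*$, both $f(x)-f^*\ge\tfrac12\|\nabla f(x)\|^2$ and $\angs{\nabla f(x)}{x-x^*}\ge f(x)-f^*+\tfrac12\|\nabla f(x)\|^2$. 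Since the target inequality ``$f(x_n)-f^*\le r_k\|x_0-x^*\|^2$'' is too weak to be composed with itself, the first task is to guess the correct \emph{inductive invariant}: a single quadratic inequality controlling $f(x_n)-f^*$ together with the terminal displacement and the terminal gradient, of the rough shape $2a_k\,(f(x_n)-f^*)+\bigl\|(x_n-x^*)+b_k\nabla f(x_n)\bigr\|^2\le\|x_0-x^*\|^2$ for suitable $a_k,b_k>0$ with $r_k=\tfrac1{2a_k}$. Dropping the square recovers the theorem; the extra $\nabla f(x_n)$ term is exactly what makes the output of one copy of $h_n$ a legitimate input to the next. Equivalently, one is producing an explicit nonnegative combination of interpolation inequalities certifying the bound (a PEP-style certificate), organized fractally.

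For the inductive step, write $h_{2n+1}=[h_n,\gamma_k,h_n]$ with $\gamma_k=1+\rho^{k-1}$ and run the three pieces: $x_0\leadsto y$ (first $h_n$), $y'=y-\gamma_k\nabla f(y)$ (the spike), $y'\leadsto x_{2n+1}$ (second $h_n$). Applying the invariant to the first half bounds $\bigl\|(y-x^*)+b_k\nabla f(y)\bigr\|^2+2a_k(f(y)-f^*)$ by $\|x_0-x^*\|^2$. Expanding $\|y'-x^*\|^2=\bigl\|(y-x^*)-\gamma_k\nabla f(y)\bigr\|^2$, converting $\angs{\nabla f(y)}{y-x^*}$ into $f(y)-f^*$ plus a gradient term via co-coercivity at $y$, and absorbing the leftover gradient term using $\|\nabla f(y)\|^2\le 2(f(y)-f^*)$, one obtains $\|y'-x^*\|^2\le\|x_0-x^*\|^2-2a'_{k+1}(f(y)-f^*)$ for an explicit $a'_{k+1}=a'_{k+1}(a_k,b_k,\gamma_k)$ --- and here the \emph{large} stepsize $\gamma_k>2$ is admissible precisely because the first-half invariant feeds in the favorable $b_k\nabla f(y)$ term. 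Feeding this into the invariant for the second half, and bookkeeping the function-value terms (using one further interpolation inequality to relate $f(y)$, $f(x_{2n+1})$, and $f^*$), yields the invariant at level $k+1$ with new parameters $a_{k+1},b_{k+1}$; requiring this bookkeeping to close \emph{exactly} forces the value of $\gamma_k$ and produces a two-term recursion of the form $a_{k+1}=2a_k+(\text{contribution of the spike})$. Its characteristic equation is $x^2=2x+1$, whose positive root is the silver ratio $\rho=1+\sqrt2$ --- exactly analogous to how $x^2=x+1$ and the golden ratio govern Fibonacci-type recursions --- and this is why $\rho^{k-1}$ appears in the stepsizes and $\rho^{2k}$ in $r_k$. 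Checking that the recursion indeed reproduces the closed form $r_{k+1}=1/(1+\sqrt{4\rho^{2(k+1)}-3})$ is the main obstacle; everything else is routine algebra.

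Finally I would unwind the induction: the base case ($n=0$, or equivalently one step with $h_1=[\sqrt2]$ handled by interpolation inequalities for a single GD step) gives $r_0=\tfrac12$ from $1$-smoothness at $x^*$, $f(x_0)-f^*\le\tfrac12\|x_0-x^*\|^2$; the inductive step then yields $f(x_n)-f^*\le r_k\|x_0-x^*\|^2$ for every $n=2^k-1$, which after undoing the rescaling is Theorem~\ref{thm:main}. The iteration-complexity bound follows by combining this with the elementary estimate $r_k\le\tfrac12 n^{-\log_2\rho}$ from \eqref{eq:r-asymptotics} and solving $\tfrac12 n^{-\log_2\rho}M\|x_0-x^*\|^2\le\eps$ for $n$. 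The two places I expect to need genuine care are (i) pinning down the exact coefficients $a_k,b_k$ (equivalently, the precise quadratic form of the invariant) so that the gluing is lossless and the final bound is tight, and (ii) verifying that the resulting scalar recursion has the claimed closed form; the asymptotic tightness of \eqref{eq:r-asymptotics} is then a one-line computation.
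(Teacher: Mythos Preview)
Your overall plan --- strengthen the conclusion to the composable invariant $\|x_n - c_k g_n - x^*\|^2 + \tfrac{1}{r_k}(f_n - f^*) \le \|x_0 - x^*\|^2$ and prove it by induction on $k$ --- is exactly the paper's strategy (Remark~\ref{rem:formulation} and identity~\eqref{eq:cert}); the base case and the reduction to $M=1$ are handled the same way. (Minor point: the sign in the invariant should be $x_n - c_k g_n$, not $x_n + b_k g_n$; with the wrong sign the ``favorable'' gradient term points the wrong way.)

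The gap is in the inductive step. Your plan treats the two copies of $h_n$ as black boxes and closes the recursion with ``one further interpolation inequality'' at the boundary. This does not work: if you apply the $n$-step invariant to each half with multipliers $1$ and $\mu_2$, matching the $\|x_{2n+1}-x^*\|^2$ coefficient forces $\mu_2=1$, and then matching the $\langle x_{2n+1}-x^*,\, g_{2n+1}\rangle$ coefficient forces the multiplier on $Q_{*,2n+1}$ to equal $c_k - c_{k+1} < 0$. So no nonnegative combination of the two black-box invariants plus the six boundary co-coercivities $\{Q_{ij}: i\ne j \in \{n,2n+1,*\}\}$ can produce the $(2n+1)$-step invariant. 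In particular, the ``bookkeeping'' you allude to cannot convert the first-half gain $f(y)-f^*$ into $f_{2n+1}-f^*$ without violating nonnegativity, and the two-term recursion $a_{k+1}=2a_k+\cdots$ never emerges from these ingredients alone.

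The paper supplies a qualitatively different ingredient: it scales the second-half certificate by $\rho^2 = 1+2\rho$ (not $1$) and then adds a \emph{rank-one correction} built from $O(n)$ interpolation inequalities $Q_{n,t},\,Q_{2n+1,t},\,Q_{*,t}$ at \emph{every} intermediate iterate $t\in\{n+1,\ldots,2n\}$, with weights proportional to $\alpha_t$. Although this touches all intermediate iterates, the GD relation $\sum_{t=n+1}^{2n}\alpha_t g_t = x_{n+1}-x_{2n+1}$ collapses the sum back to a quadratic form in only $x_n,g_n,x_{2n+1},g_{2n+1}$ (Lemma~\ref{lem:succinct}); it is this telescoping mechanism --- not a single boundary inequality --- that supplies the missing $2\rho\|x_{2n+1}-x^*\|^2$ and cross terms. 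So the induction step is \emph{not} black-box composition: one must open up the second-half certificate and modify its internal multipliers, which is precisely the recursive gluing of Theorem~\ref{thm:cert}.
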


We make several remarks. 1) This rate $n^{-\log_2 \rho} \approx n^{-1.2716}$ is intermediate between the textbook unaccelerated rate $n^{-1}$ and the accelerated rate $n^{-2}$ due to Nesterov in 1983~\citep{nesterov-agd}; see Figure~\ref{fig:threerates} for a visualization. 2) The Silver Stepsize Schedule is independent of the horizon, see \S\ref{sec:construction}. 3) We conjecture that, up to a constant factor, the rate $r_k$ is optimal among all possible stepsize schedules. This will be addressed in the forthcoming Part III.

\begin{figure}
	\centering
	\includegraphics[width=0.45\linewidth]{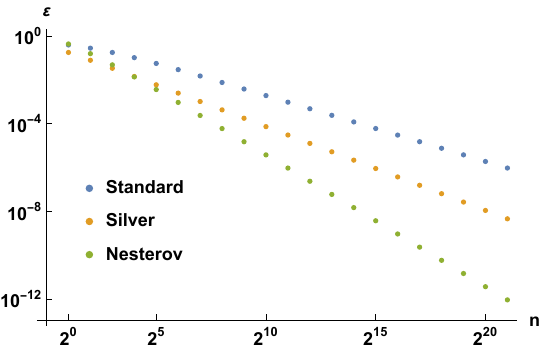}
	\caption{\footnotesize Upper bound on the optimality gap $f(x_n)-f^*$, as a function of the number of iterations $n$. The three plots correspond to the rates $O(n^{-1})$ for the standard constant stepsize $1/M$~\citep[Corollary 2.1.2]{nesterov-survey}, $O(n^{-\log_2 \rho}) \approx O({n^{-1.2716}})$ for the Silver Stepsize Schedule (Theorem~\ref{thm:main}), and $O(n^{-2})$ for Nesterov acceleration~\citep[Theorem 2.2.2]{nesterov-survey}.
 }
	\label{fig:threerates}
\end{figure}

\subsection{Related work }\label{ssec:rel}

Several time-varying stepsize schedules have been considered, e.g., Armijo-Goldstein rules, Polyak-type schedules, Barzilai-Borwein-type schedules, etc. However, until recently, no convergence analyses improved over the textbook unaccelerated rate except in the special case of minimizing convex quadratics. Here we discuss the recent line of work on accelerated GD via time-varying stepsizes. For brevity, we refer to Paper~I~\citep{alt23hedging1} for a more complete discussion of adjacent bodies of literature.

\par Beginning with Altschuler's 2018 MS thesis~\citep{altschuler2018greed}, a line of work designed time-varying stepsize schedules to achieve faster convergence rates for (non-quadratic) convex optimization. The thesis~\citep{altschuler2018greed} showed that time-varying stepsizes can lead to improvements over the textbook unaccelerated GD rate---by an optimal asymptotic factor in the separable setting by using random stepsizes, and by a constant factor in the strongly convex setting by giving optimal stepsizes for $n=2,3$. A key difficulty in the general non-separable setting is that the search for optimal stepsizes is non-convex and computationally difficult for larger horizons $n$.

In 2022,~\citet{gupta22} combined Branch \& Bound techniques with the PESTO SDP of~\citep{pesto} to develop algorithms that perform this search numerically, and as an example computed good approximate schedules in the convex setting for larger values of $n$ up to $50$. They observed a fit of roughly $O(n^{-1.178})$
and suggested from this that the asymptotic rate may be faster than the textbook rate $O(n^{-1})$. 

In July 2023,~\citet{grimmer23} showed how to prove asymptotic rates for the (non-strongly) convex setting by periodically cycling through finite schedules. While in the strongly convex setting composing progress from different cycles just amounts to multiplying contraction rates, in the non-strongly convex setting this can be more subtle depending on the approach.\footnote{Cf., our recursive gluing approach, which is a simple way of composing progress that unifies the convex (in \S\ref{sec:gluing}) and strongly convex settings (in Paper I). This is why our analysis is so compact and completely bypasses the machinery of straightforward patterns. See Remark~\ref{rem:formulation}.}
To deal with this, he introduced the notion of straightforward stepsize patterns and obtained a constant-factor improvement over the textbook unaccelerated rate by cycling through approximate schedules of length $n=127$. From these numerics, he conjectured that further optimizing stepsizes might lead to an asymptotic rate of $O((n \log n)^{-1})$, a milder improvement than that conjectured by~\citep{gupta22}. 

\par In September 2023, two concurrent papers appeared~\citep{GrimmerShuWang,alt23hedging1}.~\citet{alt23hedging1} was Part~I: there we proposed the Silver Stepsize Schedule of arbitrary size to prove asymptotic acceleration for the strongly convex setting. This improved the textbook unaccelerated rate $\Theta(\kappa)$ to $\Theta(\kappa^{\log_2 \rho}) \approx \Theta(\kappa^{0.7864})$ where $\kappa$ is the condition number. We conjectured and provided partial evidence that these rates are optimal among all possible stepsize schedules. This result was achieved by introducing the technique of recursive gluing to establish multi-step descent, which we make use of here. The other paper was~\citet{GrimmerShuWang}. By utilizing a certain non-periodic sequence of increasingly large stepsizes and building upon the ``straightforwardness'' machinery in~\citep{grimmer23}, they proved the rate $O(n^{-1.0245})$ for the convex setting in v1, later improved to $O(n^{-1.0564})$ in v2. 

\par The results in Part I~\citep{alt23hedging1} were stated for the strongly convex setting. As pointed out throughout that paper, this is not a restriction because on the one hand those results immediately imply analogously accelerated rates 
$\Theta(n^{-\log_2 \rho}) \approx \Theta(n^{-1.2716})$
for the convex setting via a standard black-box reduction; and on the other hand, this reduction can be bypassed by re-doing the analysis for the convex setting since all the core conceptual ideas extend directly~\citep[\S1.1.4]{alt23hedging1}.
The present paper provides these details.

\subsection{Notation}\label{ssec:notation}

\paragraph*{Indexing.} Throughout, the horizon is $n = 2^k-1$. This correspondence between $n \in \{1,3,7,15,\dots\}$ and $k \in \{1,2,3,4,\dots\}$ allows re-indexing in a way that simplifies notation for the recursion.

\paragraph*{Rescaling.} For simplicity, we normalize $M = 1$, i.e., $\| \nabla f(x) - \nabla f(y) \| \leq \|x-y\|$ for all $x,y$. This is without loss of generality since if $h$ is $M$-smooth and convex,  then $f := h/M$ is $1$-smooth and convex, thus the result we establish $f_n - f^* \leq r_k\|x_0 - x^*\|^2$ implies $h_n - h^* \leq r_k M \|x_0 - x^*\|^2$. Note that running GD on $f$ simply amounts to rescaling the Silver Stepsize Schedules for $h$ by $1/M$.
	  
\section{Silver Stepsize Schedule}\label{sec:construction}

 The Silver Stepsize Schedule is defined recursively in~\eqref{eq:steps-recursive}. Here we mention an equivalent direct expression and make several remarks. Let $\bin(t)$ denote the $2$-adic valuation of $t$, i.e., the smallest non-negative integer $i$ such that $2^i$ is in the binary expansion of $t$. For example, $\bin(1) = 0$, $\bin(2) = 1$, $\bin(3) = 0$, $\bin(4) = 2$, etc.

\begin{defin}[Silver Stepsize Schedule for smooth convex optimization]
	For $t \in \{0,1, 2,\dots\}$, the $t$-th stepsize of the Silver Stepsize Schedule is
		\begin{align}
		\alpha_t
		:=
		1 + \rho^{\bin(t+1)-1}\,.
		\label{eq:steps}
	\end{align}
\end{defin}
This schedule is non-monotonic, fractal-like, and has increasingly large spikes that grow exponentially (by a factor of $\rho$) yet become exponentially less frequent (by a factor of $2$). See Figure~\ref{fig:silverstepsize}. It can also be easily implemented\footnote{For instance, in Python the command \texttt{[1+rho**((k \& -k).bit\_length()-2) for k in range(1,64)]} generates the first $63$ steps of the Silver Schedule shown in Figure~\ref{fig:silverstepsize}.} in any computer language.

\begin{remark}[Limit of Silver Stepsize Schedules in the strongly convex case]\label{rem:steps-limit}
	The schedule~\eqref{eq:steps} is simply the Silver Stepsize Schedule for smooth \emph{strongly}-convex optimization in Part~I~\citep{alt23hedging1}, in the limit as the strong convexity parameter tends to~$0$.
	The stepsizes simplify in the limit: they increase by a factor of $\rho$, shorter schedules are prefixes of longer schedules, neither $a_1$ nor any of the $b_n$ sequence in~\citep{alt23hedging1} is needed, and they are non-periodic (hence why there is no rate saturation here).
\end{remark}

We also record a simple closed-form expression for the sum of the first $n=2^k -1$ Silver Stepsizes. 

\begin{lemma}[Sum of Silver Stepsizes]\label{lem:sum-steps}
	$\sum_{t=0}^{n-1} \alpha_t = \rho^k - 1$ for any $k \in \N$. 
\end{lemma}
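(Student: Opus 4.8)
The plan is to induct on $k$ using the recursive construction~\eqref{eq:steps-recursive}. Write $S_k := \sum_{t=0}^{n-1}\alpha_t$ for $n = 2^k-1$; that is, $S_k$ is the sum of the entries of the length-$n$ schedule $h_n$. The recursion $h_{2n+1} = [h_n,\, 1+\rho^{k-1},\, h_n]$ (with $n = 2^k-1$, so $2n+1 = 2^{k+1}-1$) immediately yields the scalar recursion
\begin{align*}
	S_{k+1} = 2S_k + 1 + \rho^{k-1},
\end{align*}
since summing the concatenation adds two copies of $S_k$ plus the middle spike. For the base case, $h_1 = [\sqrt 2]$ gives $S_1 = \sqrt 2 = \rho - 1$, matching the claim.

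For the inductive step, the only property of the silver ratio needed is that it is a root of $x^2 = 2x+1$; indeed $\rho^2 = (1+\sqrt2)^2 = 3 + 2\sqrt2 = 2(1+\sqrt2)+1 = 2\rho+1$. Assuming $S_k = \rho^k - 1$, we compute
\begin{align*}
	S_{k+1} = 2(\rho^k - 1) + 1 + \rho^{k-1} = (2\rho + 1)\,\rho^{k-1} - 1 = \rho^2 \cdot \rho^{k-1} - 1 = \rho^{k+1} - 1,
\end{align*}
which closes the induction.

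Alternatively, one can argue directly from the closed form~\eqref{eq:steps}: since $\alpha_t = 1 + \rho^{\nu(t+1)-1}$, we have $\sum_{t=0}^{n-1}\alpha_t = n + \rho^{-1}\sum_{s=1}^{2^k-1}\rho^{\nu(s)}$. Among $s \in \{1,\dots,2^k-1\}$, exactly $2^{k-1-j}$ values have $\nu(s)=j$ for each $0 \le j \le k-1$ (the multiples of $2^j$ but not $2^{j+1}$ below $2^k$), so the sum becomes a geometric series $\rho^{-1}\sum_{j=0}^{k-1}2^{k-1-j}\rho^j$, which evaluates to $\rho^k - 2^k$ using $\rho - 2 = \sqrt2 - 1 = 1/\rho$; adding $n = 2^k-1$ again gives $\rho^k - 1$.

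There is no genuine obstacle here: both routes are short, and the only piece of "content" is the quadratic identity $\rho^2 = 2\rho + 1$ (equivalently, in the second approach, $\rho - 2 = 1/\rho$), together with the elementary bookkeeping of $2$-adic valuations. I would present the inductive proof, since it reuses the recursion~\eqref{eq:steps-recursive} that the rest of the paper is built on.
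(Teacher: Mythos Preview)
Your inductive proof is correct and is exactly the paper's approach: the paper's own proof simply says ``The base case $k=1$ is trivial. The inductive step follows from the recursion~\eqref{eq:steps-recursive},'' and you have spelled out precisely those two steps (using $\rho^2 = 2\rho + 1$ to close the induction). Your alternative direct computation via the $2$-adic valuation is also correct and is a nice bonus, though not needed.
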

\begin{proof}
	The base case $k=1$ is trivial. The inductive step follows from the recursion~\eqref{eq:steps-recursive}.
\end{proof}

\section{Recursive gluing}\label{sec:gluing}

Here we prove that the Silver Stepsize Schedule has convergence rate $r_k$ for smooth convex optimization. The analysis closely mirrors the strongly convex setting in Part~I~\citep{alt23hedging1}: we prove the advantage of time-varying stepsizes via multi-step descent rather than iterating the greedy 1-step bound, show multi-step descent by exploiting long-range consistency conditions along the GD trajectory, certify multi-step descent via recursive gluing, and recursively glue by combining the same three components in the same way. In the interest of brevity, we refer to~\citep{alt23hedging1} for a detailed discussion of all these concepts. 

\par Briefly, the idea behind multi-step descent is that it is essential to capture how different iterations affect other iterations' progress. We do this by exploiting long-range consistency conditions between the iterates along GD's trajectory, as encoded by the co-coercivities 
\begin{align}
	Q_{ij} := 2(f_i - f_j) + 2\langle g_j, x_j - x_i \rangle - \|g_j - g_i\|^2\,.
	\label{eq:cocoercivity}
\end{align}
The significance of these co-coercivities is that the constraints $\{Q_{ij} \geq 0\}_{i \neq j \in \{0,1,\dots,n,*\}}$ are necessary and sufficient for the existence of a $1$-smooth convex function $f$ satisfying $f_i = f(x_i)$ and $g_i = \nabla f(x_i)$ for each $i \in \{0,1,\dots,n,*\}$~\citep{pesto}. In other words, the co-coercivity conditions $\{Q_{ij} \geq 0 \}_{i \neq j \in \{0,1,\dots,n,*\}}$ generate all possible long-range consistency constraints on the objective function $f$. Or, said another way, the co-coercivity conditions generate all possible valid inequalities with which one can prove convergence rates for GD. For a further discussion, see~\citep[\S2.2]{alt23hedging1}.

\par Concretely, to prove Theorem~\ref{thm:main}, we exhibit explicit non-negative multipliers $\lam_{ij}$ satisfying
\begin{align}
	\sum_{i \neq j \in \{0, \dots, n,*\}} \lam_{ij} Q_{ij}  = 
	\|x_0 - x^*\|^2 - \|x_n - c_k g_n - x^*\|^2
	+ \frac{f^* - f_n}{r_k}\,,
	\label{eq:cert}
\end{align}
where we use the shorthand $c_k := \frac{1}{2r_k}$. Since $\sum_{ij}\lam_{ij} Q_{ij} \geq 0$ for any $1$-smooth convex function, and since $\|x_n - c_k g_n - x^*\|^2 \geq 0$ trivially as it is a square, this immediately implies the desired rate 
\begin{align}
	f_n - f^* \leq r_k \|x_0 - x^*\|^2\,.
	\label{eq:cert-cor}
\end{align}

\begin{remark}[The importance of a composable formulation]\label{rem:formulation}
We emphasize that while there are several alternative formulations to~\eqref{eq:cert} that imply a rate like~\eqref{eq:cert-cor} after dropping terms, our formulation~\eqref{eq:cert} is ``canonical'' because it composes well under recurrence (Theorem~\ref{thm:cert}). This enables a quite short and simple proof. Another reason this formulation is particularly nice is because the base case $n=0$ amounts to a re-writing of the definition of $Q_{*0}$, which is an improvement over the standard bound $f_0 - f^* \leq \half \|x_0 - x^*\|^2$. In fact, this improvement is precisely what makes  recursive gluing work so seamlessly.
\end{remark}

\begin{example}[$n=0$]\label{ex:n=0}
	For $n=0$, the identity~\eqref{eq:cert} 
	is
	\begin{align*}
		Q_{*0} = \|x_0 - x^*\|^2 - \|x_0 - g_0 - x^*\|^2 + 2(f^* - f_0)\,.
	\end{align*}
	 Here, $r_0 = \half$, $c_0 = 1$, and the only non-zero multiplier is $\lam_{*0} = 1$. 
\end{example}

\begin{example}[$n=1$]\label{ex:n=1}
	For $n=1$, the identity~\eqref{eq:cert} is
	\begin{align*}
		\sum_{i \neq j \in \{0, 1, *\}} \lam_{ij} Q_{ij} 
		=
		\|x_0 - x^*\|^2 - \|x_1 - c_1 g_1 - x^*\|^2 + \frac{f^* - f_1}{r_1}\,.
	\end{align*}
	Here, $x_1 = x_0 - \sqrt{2} g_0$, $r_1 
	= \frac{1}{1+\sqrt{4\rho^2-3}}
	\approx 0.1816$, $c_1 = \frac{1}{2r_1} \approx 2.7535$, and 
	\begin{align*}
		\begin{bmatrix}
			\lam_{00} & \lam_{01} & \lam_{0*} \\
			\lam_{10} & \lam_{11} & \lam_{1*} \\
			\lam_{*0} & \lam_{*1} & \lam_{**}
		\end{bmatrix}
		=
		\begin{bmatrix}
			0
			&
			1+\sqrt{2}
			&
			0
			\\
			1
			&
			0
			&
			\sqrt{2}
			\\
			\sqrt{2}
			&
			\frac{1}{2r_1}
			&
			0
		\end{bmatrix}
		\,.
	\end{align*}
\end{example}

\par For larger horizons $n$, we construct $\lam_{ij}$ via the recursive gluing technique of~\citep{alt23hedging1}. See Figure~\ref{fig:gluing}. Below, we say that the multipliers $\{\sig_{ij}\}_{i,j \in \{0,\dots,n,*\}}$ satisfy the $*$-sparsity property if $\sig_{i*} = 0$ for all $i < n$. This is satisfied by construction and simplifies part of the proof (isolated in Lemma~\ref{lem:opt}).

\begin{figure}
	\centering
	\includegraphics[width=0.4\linewidth]{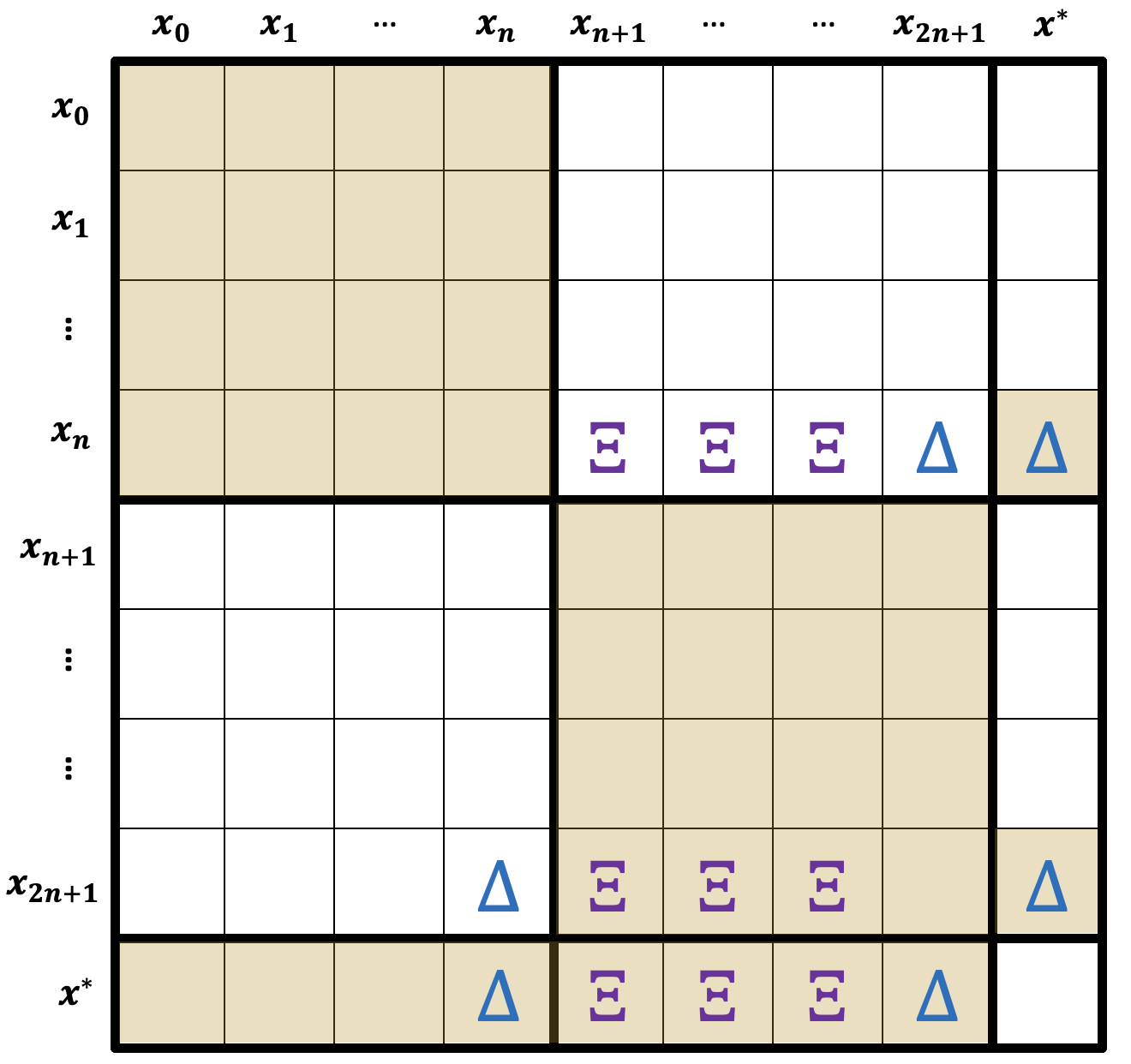}
	\caption{\footnotesize Components of the recursively glued certificate in Theorem~\ref{thm:cert}, illustrated here for combining two copies of the $n=3$ certificate (shaded) to create the $2n+1 = 7$ certificate. The structure of this recursive gluing is identical to the strongly convex setting from Part I~\citep{alt23hedging1}, modulo re-indexing for horizons of the form $n = 2^k -1 $ rather than $2^k$.}
	\label{fig:gluing}
\end{figure}

  \begin{theorem}[Recursive gluing]\label{thm:cert}
	Let $n = 2^k-1$. Suppose $\{\sig_{ij}\}_{i,j \in \{0,\dots,n,*\}} $ satisfies $*$-sparsity and certifies the $n$-step rate, i.e.,
	\begin{align}
		\sum_{i \neq j \in \{0, \dots, n,*\}} \sig_{ij} Q_{ij} 
		=
		\|x_0 - x^*\|^2 - \|x_n - c_k g_n - x^*\|^2  + \frac{f^* - f_n}{r_k}\ .
		\label{eq:cert:n}
	\end{align}
	Then there exists $\{\lam_{ij}\}_{i, j \in \{0,\dots,2n+1,*\}}$ that satisfies $*$-sparsity and certifies the $2n+1$-step rate, i.e.,
	\begin{align}
		\sum_{i \neq j \in \{0, \dots, 2n+1,*\}} \lam_{ij} Q_{ij} 
		=
		\|x_0 - x^*\|^2 - \|x_{2n+1} - c_{k+1} g_{2n+1} - x^*\|^2 + \frac{f^* - f_{2n+1}}{r_{k+1}}\ .
		\label{eq:cert:2n+1}
	\end{align}
	Moreover, this certificate is explicitly given by
	\begin{align}
		\lam_{ij} = \underbrace{\Theta_{ij}}_{\text{gluing}}
		+
		\underbrace{\Xi_{ij}}_{\text{rank-one correction}}
		+
		\underbrace{\Delta_{ij}}_{\text{sparse correction}}
		\label{eq:lambda}
	\end{align}
	The ``gluing component'' $\Theta$ is defined as 
	\begin{align}
		\Theta_{ij} := \underbrace{\sig_{i,j} \cdot \mathds{1}_{i,j \in \{0,\dots,n,*\}}}_{\text{recurrence for first $n$ steps}}
		\;\;+\;\;
		\underbrace{ (1 + 2\rho)\; \sig_{i-n-1,j-n-1} \cdot \mathds{1}_{i,j \in \{n+1,\dots,2n+1,*\}}}_{\text{recurrence for last $n$ steps}}\,.
		\label{eq:Theta}
	\end{align}
	The ``rank-one correction'' $\Xi$ is zero except the entries $\{\Xi_{ij}\}_{i \in \{n,2n+1,*\},\; j \in \{n+1, \dots, 2n\}}$ which are
	\begin{align}
		\begin{bmatrix}
			\Xi_{n,n+1} & \Xi_{n,n+2} & \cdots & \Xi_{n,2n} \\
			\Xi_{2n+1,n+1} & \Xi_{2n+1,n+2} & \cdots & \Xi_{2n+1,2n} \\
			\Xi_{*,n+1} & \Xi_{*,n+2} & \cdots & \Xi_{*,2n}
		\end{bmatrix}
		:=
		\rho \begin{bmatrix}
			1 \\
			1 \\
			-2
		\end{bmatrix} \begin{bmatrix}
			\alpha_{n+1} & \alpha_{n+2} & \cdots & \alpha_{2n}
		\end{bmatrix}
		\label{eq:Xi}
	\end{align}
	The ``sparse correction'' $\Delta$ is zero except the entries $\{\Delta_{ij}\}_{i \neq j \in \{n,2n+1,*\}}$ which are
	\begin{align}
		\begin{bmatrix}
			\Delta_{n,n}
			&
			\Delta_{n,2n+1}
			&
			\Delta_{n,*} 
			\\
			\Delta_{2n+1,n}
			&
			\Delta_{2n+1,2n+1} 
			&
			\Delta_{2n+1,*} 
			\\
			\Delta_{*,n}
			&
			\Delta_{*,2n+1} 
			&
			\Delta_{*,*}
		\end{bmatrix}
		:=
		\begin{bmatrix}
			0 
			&
			\rho
			&
			1 - \rho^k
			\\
			\rho^k
			&
			0 
			&
			2\rho - \sqrt{2} \rho^{k+1}
			\\
			1 + \rho^{k-1} - \frac{1}{2r_k}
			&
			\frac{1}{2r_{k+1}} - \frac{1+2\rho}{2r_k}
			&
			0
		\end{bmatrix}
		\label{eq:Del}
	\end{align}
\end{theorem}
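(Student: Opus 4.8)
Because the certificate $\lam_{ij}=\Theta_{ij}+\Xi_{ij}+\Delta_{ij}$ is written down explicitly, the proof is a verification with three parts: (i) the polynomial identity \eqref{eq:cert:2n+1}; (ii) $\lam_{ij}\geq 0$; (iii) $*$-sparsity of $\lam$. The organizing observation is that, by \eqref{eq:steps-recursive}, the length-$(2n+1)$ Silver schedule is $[h_n,\,1+\rho^{k-1},\,h_n]$, so the two sub-trajectories $x_0,\dots,x_n$ and $x_{n+1},\dots,x_{2n+1}$ are each honest GD runs with the length-$n$ schedule toward the common minimizer $x^*$ (with $g_*=0$). I would apply the hypothesis \eqref{eq:cert:n} to the first sub-trajectory verbatim, and to the second after relabeling indices $i\mapsto i-n-1$ and multiplying through by $1+2\rho$; summing the two gives exactly $\sum_{ij}\Theta_{ij}Q_{ij}$ equal to the sum of the two (scaled, relabeled) right-hand sides of \eqref{eq:cert:n}. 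The whole theorem then reduces to showing that $\sum_{ij}(\Xi_{ij}+\Delta_{ij})Q_{ij}$ equals the difference between that sum and the target right-hand side of \eqref{eq:cert:2n+1}.

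\textbf{Collapsing the corrections to boundary data.} Expanding via \eqref{eq:cocoercivity}, for each interior index $j\in\{n+1,\dots,2n\}$ the three $\Xi$-terms combine as $\Xi_{n,j}Q_{n,j}+\Xi_{2n+1,j}Q_{2n+1,j}+\Xi_{*,j}Q_{*,j}=\rho\alpha_j\bigl(Q_{n,j}+Q_{2n+1,j}-2Q_{*,j}\bigr)$, and the coefficient pattern $(1,1,-2)$ is precisely what kills the $f_j$ and $\langle g_j,x_j\rangle$ terms; what survives is linear in $g_j$ against a vector that does not depend on $j$, namely $2\rho\langle g_j,\,2x^*-x_n-x_{2n+1}+g_n+g_{2n+1}\rangle$ plus terms independent of $j$. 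Summing over $j$ and using the telescoping identity $\sum_{j=n+1}^{2n}\alpha_j g_j=x_{n+1}-x_{2n+1}$ together with $\sum_{j=n+1}^{2n}\alpha_j=\rho^k-1$ (Lemma~\ref{lem:sum-steps}) eliminates all interior gradients, values, and iterates. Finally the bridge step $x_{n+1}=x_n-(1+\rho^{k-1})g_n$ (as $\alpha_n=1+\rho^{k-1}$) lets me eliminate $x_{n+1}$. At this point both sides of \eqref{eq:cert:2n+1} are explicit quadratic forms in the finite boundary data $\{x_0-x^*,\ x_n,\ x_{2n+1},\ g_n,\ g_{2n+1},\ f_n,\ f_{2n+1},\ f^*\}$.

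\textbf{Matching monomials.} Expanding the target square $\|x_{2n+1}-c_{k+1}g_{2n+1}-x^*\|^2$ and the tail square $\|x_n-c_k g_n-x^*\|^2$ from the first sub-certificate, I would then check the identity coefficient by coefficient over a basis of monomials in the boundary data: $\|x_0-x^*\|^2$, $\|x_{2n+1}-x^*\|^2$, $\langle x_{2n+1}-x^*,g_{2n+1}\rangle$, $\|g_{2n+1}\|^2$, $f^*-f_n$, $f^*-f_{2n+1}$, and the $g_n$-linear/$g_n$-quadratic and $\langle x_0-x^*,g_n\rangle$ terms. This is where the silver-ratio arithmetic ($\rho^2=2\rho+1$, $\sqrt2\,\rho=\rho+1$, hence $\rho^{k+1}(1-\sqrt2)=-\rho^k$) and the scalar recursion $c_k^2-c_k=\rho^{2k}-1$ (equivalently the entry $\Delta_{*,2n+1}=c_{k+1}-(1+2\rho)c_k$ in \eqref{eq:Del}) get used; the numerical entries of $\Xi$ and $\Delta$ in \eqref{eq:Xi}--\eqref{eq:Del} are calibrated exactly so that every coefficient cancels. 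As a spot check, the coefficient of $f_{2n+1}$ works out: the $c_k$-contributions from $\Delta$ cancel those from the scaled second sub-certificate, and what is left equals $-1/r_{k+1}$ after using $\rho^{k+1}(1-\sqrt2)=-\rho^k$. I expect this monomial bookkeeping to be the main obstacle; it is routine but unforgiving, since one must carefully track which sub-certificate each squared term descends from and how $\Xi$ and $\Delta$ redistribute the interface terms.

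\textbf{Non-negativity and $*$-sparsity.} $\Theta\geq 0$ since $\sig\geq 0$ and $1+2\rho>0$, and $\Theta$ inherits $*$-sparsity from $\sig$ in both copies. The only negative entries anywhere are $\Xi_{*,j}=-2\rho\alpha_j$ for $j\in\{n+1,\dots,2n\}$ and $\Delta_{n,*}=1-\rho^k$. For the former, $\Delta_{*,j}=0$ and $\Theta_{*,j}=(1+2\rho)\sig_{*,j-n-1}$; reading off the coefficient of $\langle x_0-x^*,g_m\rangle$ in \eqref{eq:cert:n} forces $\sig_{*,m}=\alpha_m$ for $m<n$, and the prefix property $\alpha_j=\alpha_{j-n-1}$ (from \eqref{eq:steps-recursive}) then gives $\lam_{*,j}=(1+2\rho)\alpha_j-2\rho\alpha_j=\alpha_j>0$. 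For the latter, $*$-sparsity of $\lam$ requires $\lam_{n,*}=\sig_{n,*}+(1-\rho^k)=0$, i.e.\ $\sig_{n,*}=\rho^k-1=\sum_{t<n}\alpha_t$; this invariant is itself propagated by the recursion, since the same combination shows $\lam_{2n+1,*}=(1+2\rho)(\rho^k-1)+2\rho-\sqrt2\,\rho^{k+1}=\rho^{k+1}-1$, closing the induction and matching Lemma~\ref{lem:sum-steps}. These $*$-row/column identities for $\sig$ are the content of the auxiliary Lemma~\ref{lem:opt}, and all remaining entries of $\lam$ are manifestly non-negative.
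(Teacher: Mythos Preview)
Your proposal is correct and follows essentially the same route as the paper: apply the $n$-step certificate to each half (giving $\sum\Theta_{ij}Q_{ij}$), collapse the rank-one correction $\Xi$ to boundary data via the telescoping identity $\sum_j\alpha_jg_j=x_{n+1}-x_{2n+1}$ and Lemma~\ref{lem:sum-steps}, then check a finite list of coefficient identities; the paper merely organizes this last step more systematically by separating the linear (in $f$) and quadratic (in $x,g$) parts into explicit $3$-vectors $e,s,\ell$ and $4\times4$ matrices $E,S,L$ (Lemmas~\ref{lem:succinct-linear}--\ref{lem:succinct}) and verifying $e-s-\ell=0$, $E-S-L=0$. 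One small gap in your non-negativity check: the list of negative entries is incomplete, since $\Delta_{2n+1,*}=2\rho-\sqrt{2}\rho^{k+1}$, $\Delta_{*,n}=1+\rho^{k-1}-\tfrac{1}{2r_k}$, and $\Delta_{*,2n+1}=\tfrac{1}{2r_{k+1}}-\tfrac{1+2\rho}{2r_k}$ are also negative in general---but all three are handled by the same Lemma~\ref{lem:opt} argument you already invoke, yielding $\lam_{2n+1,*}=\rho^{k+1}-1$, $\lam_{*,n}=1+\rho^{k-1}$, $\lam_{*,2n+1}=\tfrac{1}{2r_{k+1}}$, each non-negative.
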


We remark that using this recursion, one can also write out the $n$-step certificate directly. The value of each multiplier then depends on the binary expansion of its indices.

This recursive gluing immediately implies the main result of the paper, Theorem~\ref{thm:main}. 

\begin{proof}[Proof of Theorem~\ref{thm:main}]
	It suffices to prove~\eqref{eq:cert}; we prove this by induction. The base case $n=1$ is Example~\ref{ex:n=1}. The inductive step is Theorem~\ref{thm:cert}. 
\end{proof}

The rest of the section is dedicated to proving Theorem~\ref{thm:cert}. We first isolate two helper lemmas in \S\ref{ssec:gluing:helper}, and then we combine them to prove the result in \S\ref{ssec:gluing:pf}.  For notational simplicity, henceforth we assume $x^* = 0$; this is without loss of generality after translating. 

\subsection{Helper lemmas}\label{ssec:gluing:helper}

Here we provide three helper lemmas for the proof of Theorem~\ref{thm:cert}. The first lemma explicitly computes all multipliers involving $x^*$ for the $n$-step certificate. 

\begin{lemma}[Multipliers involving $x^*$]\label{lem:opt} 
	$\sig_{n*} = \rho^k - 1$, $\sig_{*n} = \frac{1}{2r_k}$, and $\sig_{*t} = \alpha_t$ for $t \in \{0,\dots,n-1\}$. 
\end{lemma}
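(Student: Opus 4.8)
\textbf{Proof plan for Lemma~\ref{lem:opt}.}

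The plan is to prove this by the same induction on $k$ that underlies everything else in the section: the claim is trivially checked for the base case $n=1$ (read off from the $\lambda$ matrix in Example~\ref{ex:n=1}, where $\sigma_{1*} = \sqrt{2} = \rho - 1 = \rho^1 - 1$, $\sigma_{*1} = \tfrac{1}{2r_1}$, and $\sigma_{*0} = \sqrt{2} = \alpha_0$), and the inductive step is extracted from the explicit gluing formula~\eqref{eq:lambda}--\eqref{eq:Del} in Theorem~\ref{thm:cert}. So I would assume the statement holds for the $n$-step certificate $\{\sigma_{ij}\}$ (with $n = 2^k - 1$) and compute the relevant entries of the $(2n+1)$-step certificate $\{\lambda_{ij}\}$ (with $2n+1 = 2^{k+1}-1$), checking that they match the claim with $k$ replaced by $k+1$.

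Concretely, the new ``last index'' is $2n+1$ and the new horizon parameter is $k+1$, so I need to verify three things: (i) $\lambda_{2n+1,*} = \rho^{k+1} - 1$; (ii) $\lambda_{*,2n+1} = \tfrac{1}{2r_{k+1}}$; and (iii) $\lambda_{*,t} = \alpha_t$ for all $t \in \{0,\dots,2n\}$. For (i), by $*$-sparsity of $\{\sigma_{ij}\}$ the gluing term $\Theta_{2n+1,*}$ only picks up $(1+2\rho)\sigma_{n*} = (1+2\rho)(\rho^k-1)$ from the second block, $\Xi_{2n+1,*} = 0$ (the rank-one correction has no $*$-column), and $\Delta_{2n+1,*} = 2\rho - \sqrt{2}\rho^{k+1}$; adding these should telescope to $\rho^{k+1}-1$ using $\rho^2 = 2\rho+1$ and $\sqrt{2}\rho = \rho^2 - 1 = 2\rho$, i.e. $\sqrt{2}\rho^{k+1} = 2\rho \cdot \rho^k = (\rho^2-1)\rho^k$. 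For (ii), $\Theta_{*,2n+1} = (1+2\rho)\sigma_{*n} = \tfrac{1+2\rho}{2r_k}$, $\Xi_{*,2n+1}=0$, and $\Delta_{*,2n+1} = \tfrac{1}{2r_{k+1}} - \tfrac{1+2\rho}{2r_k}$, which sum to exactly $\tfrac{1}{2r_{k+1}}$ by design — this is essentially immediate. For (iii), I split $\{0,\dots,2n\}$ into three parts: the first block $t \in \{0,\dots,n-1\}$, where $\lambda_{*,t} = \sigma_{*,t} = \alpha_t$ directly from the inductive hypothesis (the other two corrections vanish there); the ``pivot'' index $t = n$, where $\Theta_{*,n} = \sigma_{*n} = \tfrac{1}{2r_k}$, $\Xi_{*,n}=0$, $\Delta_{*,n} = 1 + \rho^{k-1} - \tfrac{1}{2r_k}$, summing to $1 + \rho^{k-1} = \alpha_n$ (recalling $\nu(n+1) = \nu(2^k) = k$, so $\alpha_n = 1 + \rho^{k-1}$); and the second block $t \in \{n+1,\dots,2n\}$, where the gluing term contributes $(1+2\rho)\sigma_{*,t-n-1} = (1+2\rho)\alpha_{t-n-1}$, the rank-one correction contributes $\Xi_{*,t} = -2\rho\,\alpha_t$, and $\Delta_{*,t}=0$. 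Here I use the self-similarity of the stepsize schedule: $\alpha_t = \alpha_{t-n-1}$ for $t \in \{n+1,\dots,2n-1\}$ since the schedule $h_{2n+1} = [h_n, \alpha_n, h_n]$, so the middle-block entries match the first-block entries (and $\alpha_{2n} = \alpha_{n-1}$ as well). Then $(1+2\rho)\alpha_{t-n-1} - 2\rho\,\alpha_t = (1+2\rho - 2\rho)\alpha_{t-n-1} = \alpha_{t-n-1} = \alpha_t$, as desired.

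I expect the only real bookkeeping hazard to be the index arithmetic around the pivot $t=n$ and the boundary of the $\Xi$ block (which spans $j \in \{n+1,\dots,2n\}$, not including $2n+1$), together with correctly invoking the self-similarity $\alpha_{t} = \alpha_{(t \bmod (n+1))}$ on the relevant range; the algebraic identities themselves are one-line consequences of $\rho^2 = 2\rho + 1$ and $\sqrt{2}\rho = 2\rho$ (equivalently $\rho - 1 = \sqrt 2$, $1/\rho = \sqrt2 - 1$), and of the definition $r_k = 1/(1+\sqrt{4\rho^{2k}-3})$, which gives $\tfrac{1}{2r_k} = \tfrac{1}{2}(1 + \sqrt{4\rho^{2k}-3})$ — none of which should pose a genuine obstacle. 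The main conceptual point, rather than obstacle, is simply that $*$-sparsity of the inner certificate is what kills the cross terms in $\Theta$ that would otherwise complicate (i) and (ii); this is exactly why the lemma is stated as a consequence of the construction rather than proved from scratch.
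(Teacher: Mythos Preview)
Your inductive computation is essentially correct, but the route differs from the paper's, and the difference affects the logical structure of \S\ref{sec:gluing}. The paper does \emph{not} argue by induction on the gluing. It proves Lemma~\ref{lem:opt} directly from the hypothesis~\eqref{eq:cert:n}: expand both sides using the definition of $Q_{ij}$, the $*$-sparsity assumption, and $x_t = x_0 - \sum_{s<t}\alpha_s g_s$; then match the coefficients of $\langle x_0, g_t\rangle$ (yielding $\sigma_{*t} = \alpha_t$ for $t<n$ and $\sigma_{*n} = c_k = \tfrac{1}{2r_k}$) and of $f^*$ (yielding $\sigma_{n*} = \sum_{j}\sigma_{*j} - \tfrac{1}{2r_k} = \rho^k - 1$ via Lemma~\ref{lem:sum-steps}). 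This three-line argument applies to \emph{any} $\sigma$ satisfying~\eqref{eq:cert:n} and $*$-sparsity, which is exactly the hypothesis on $\sigma$ in Theorem~\ref{thm:cert} and exactly how the lemma is consumed in the non-negativity check there. So your closing remark that the lemma is ``a consequence of the construction'' misreads its role: it is a consequence of the certificate identity, and that is what lets it be invoked \emph{before} Theorem~\ref{thm:cert} is established.

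Your induction, by contrast, only pins down the entries of the \emph{specific} certificate built from Example~\ref{ex:n=1} via~\eqref{eq:lambda}--\eqref{eq:Del}. That suffices for Theorem~\ref{thm:main}, but it does not prove Lemma~\ref{lem:opt} at the generality at which Theorem~\ref{thm:cert} is stated, and it inverts the dependency: you use the construction from Theorem~\ref{thm:cert} to prove a lemma the paper uses inside that theorem's proof. No genuine circularity arises (you use only the definitions~\eqref{eq:Theta}--\eqref{eq:Del}, not the conclusion), but you would have to fold Lemma~\ref{lem:opt} and Theorem~\ref{thm:cert} into one simultaneous induction, losing Theorem~\ref{thm:cert} as a standalone statement about arbitrary $\sigma$.

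One algebraic slip: $\sqrt{2}\,\rho \neq 2\rho$. The correct identity is $\sqrt{2}\,\rho = (\rho-1)\rho = \rho^2 - \rho = \rho + 1$, hence $\sqrt{2}\,\rho^{k+1} = \rho^{k+1} + \rho^k$; with this fix your sum for $\lambda_{2n+1,*}$ does collapse to $\rho^{k+1}-1$ as claimed.
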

\begin{proof}
	Expand both sides of the identity~\eqref{eq:cert:n} using the $*$-sparsity assumption, the definition of the co-coercivities, and the definition of GD, i.e., $x_t = x_0 - \sum_{s=0}^{t-1} \alpha_s g_s$. Matching coefficients for the $\langle x_0, g_t \rangle$ terms yields the claimed formulas for $\sig_{*t}$. Matching coefficients for the $f^*$ term gives the identity $\sig_{n*} = \sum_{j=0}^n \sig_{*j}-  \tfrac{1}{2r_k}$. This equals $\rho^k - 1$ by Lemma~\ref{lem:sum-steps}. 
\end{proof}

The next two lemmas are more substantial. These help us verify~\eqref{eq:cert:2n+1}---which consists of a linear form in all $2n+3$ function values and a quadratic form in all $2n+3$ gradients and iterates. Na\"ively verifying such an identity requires checking $\Theta(n)$ coefficients for the linear form and $\Theta(n^2)$ coefficients for the quadratic form. The following two lemmas show that due to the recursive construction of the Silver Stepsize Schedule and the gluing, these forms only affect the indices $n,2n+1,*$. This reduces verifying the rate certificate~\eqref{eq:cert:2n+1} to checking only $\Theta(1)$ coefficients, as detailed below in \S\ref{ssec:gluing:pf}. Below, for shorthand, let $F_{ij} := 2(f_i - f_j)$ and $P_{ij} := 2\langle g_j, x_j - x_i \rangle - \|g_i - g_j\|^2$  denote the linear and quadratic components of $Q_{ij}$, respectively. For bookkeeping purposes, we use $3$-dimensional vectors and $4 \times 4$ matrices to denote the coefficients of these forms.

\begin{lemma}[Succinct linear forms]\label{lem:succinct-linear}
	Let $u := [f_n, f_{2n+1}, f^*]^T$, and let $e, s, \ell \in \R^3$ be the vectors defined in Appendix~\ref{app:succinct-linear}. 
	\begin{itemize}
		\item \underline{Gluing error.} $\frac{f^* - f_{2n+1}}{r_{k+1}} - \sum_{ij} \Theta_{ij} F_{ij} = \langle e, u \rangle$
		\item \underline{Sparse correction.} $\sum_{ij} \Delta_{ij} F_{ij} = \langle s, u \rangle$
		\item \underline{Rank-one correction.} $\sum_{ij} \Xi_{ij} F_{ij} = \langle \ell, u \rangle$
	\end{itemize}
\end{lemma}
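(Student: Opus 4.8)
The plan is to expand each of the three linear forms directly and show that, after cancellation, only the coefficients of $f_n$, $f_{2n+1}$, and $f^*$ survive. Recall $F_{ij} = 2(f_i - f_j)$, so any linear combination $\sum_{ij} \mu_{ij} F_{ij}$ is a linear functional in $(f_0,\dots,f_{2n+1},f^*)$ whose coefficient on $f_m$ equals $2\sum_j (\mu_{mj} - \mu_{jm})$. The key point to exploit is the structure of each correction term.

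\textbf{Sparse correction.} This is immediate: $\Delta$ is supported on indices $\{n, 2n+1, *\}$ by definition (see~\eqref{eq:Del}), so $\sum_{ij}\Delta_{ij} F_{ij}$ is manifestly a linear functional in $f_n, f_{2n+1}, f^*$ only. The vector $s$ is then read off by computing, for each $m \in \{n, 2n+1, *\}$, the net coefficient $2\sum_j(\Delta_{mj} - \Delta_{jm})$ from the $3\times 3$ block in~\eqref{eq:Del}; this is a $\Theta(1)$ computation.

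\textbf{Rank-one correction.} By~\eqref{eq:Xi}, $\Xi$ has nonzero entries only in rows $n, 2n+1, *$ and columns $n+1,\dots,2n$, and as a matrix it is the outer product $\rho[1,1,-2]^T[\alpha_{n+1},\dots,\alpha_{2n}]$. Thus $\sum_{ij}\Xi_{ij}F_{ij} = \sum_{i\in\{n,2n+1,*\}}\sum_{j=n+1}^{2n}\Xi_{ij}\cdot 2(f_i - f_j)$. The terms involving $f_j$ for $j \in \{n+1,\dots,2n\}$ have coefficient $-2\sum_i \Xi_{ij} = -2\rho(1+1-2)\alpha_j = 0$; this rank-one/"conservation" cancellation is the crux of why the interior function values drop out. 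What remains is $2\big(\sum_j \Xi_{nj}\big)f_n + 2\big(\sum_j\Xi_{2n+1,j}\big)f_{2n+1} + 2\big(\sum_j\Xi_{*j}\big)f^*$, where $\sum_{j=n+1}^{2n}\alpha_j = \rho^k - 1$ by Lemma~\ref{lem:sum-steps} (applied to the second block of the schedule, which by~\eqref{eq:steps-recursive} is again a length-$n$ Silver schedule). This yields $\ell = 2\rho(\rho^k-1)[1,1,-2]^T$.

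\textbf{Gluing error.} This is the main obstacle. By~\eqref{eq:Theta}, $\Theta$ is a sum of two shifted copies of $\sig$: the first supported on $\{0,\dots,n,*\}$, the second (scaled by $1+2\rho$) on $\{n+1,\dots,2n+1,*\}$. Applying the hypothesis~\eqref{eq:cert:n} to each copy — the first directly, the second after the index shift $i \mapsto i+n+1$ (which maps the sub-trajectory $x_{n+1},\dots,x_{2n+1}$ onto a fresh GD trajectory with the same schedule, legitimate since the Silver schedule's second half equals $h_n$) — one gets $\sum_{ij}\sig_{ij}F_{ij}$ in terms of the first copy's linear form, and similarly for the second. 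The subtlety is bookkeeping the shared index $*$: in the first copy $*$ plays its usual role, while in the second copy the "initial point" is $x_{n+1}$ and the "final point" is $x_{2n+1}$, but $*$ is still $*$. Concretely I would: (i) write $\sum_{ij}\Theta_{ij}F_{ij} = \sum_{i\neq j\in\{0,\dots,n,*\}}\sig_{ij}F_{ij} + (1+2\rho)\sum_{i\neq j\in\{0,\dots,n,*\}}\sig_{ij}F_{i+n+1,j+n+1}$ where in the last sum the symbol $n+1+n+1=2n+2$ is interpreted as $*$ (this is the indexing convention from Part~I); (ii) use~\eqref{eq:cert:n} to replace the first sum by its known value $\tfrac{f^*-f_n}{r_k}$ up to the non-$F$ (quadratic) terms, which are precisely $\|x_0\|^2 - \|x_n - c_k g_n\|^2$ and contribute nothing to $F$; (iii) do the same for the second sum, obtaining $(1+2\rho)\tfrac{f^*-f_{2n+1}}{r_k}$; (iv) subtract from the target $\tfrac{f^*-f_{2n+1}}{r_{k+1}}$ and collect. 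Since $f_n$ appears only via the first copy and $f_{2n+1}$ only via the second, and $f^*$ from both, the result is a functional in $f_n, f_{2n+1}, f^*$ alone, giving $e$. The one genuine computation is simplifying the $f^*$-coefficient $\tfrac{1}{r_{k+1}} - \tfrac{1}{r_k} - \tfrac{1+2\rho}{r_k}$ against the recursion $r_{k+1} = r_k/(1+2\rho)\cdot(\text{something})$ implied by~\eqref{eq:r-asymptotics}; I expect this to reduce to the clean value listed for $e$ in Appendix~\ref{app:succinct-linear}, and verifying that identity is where the care is needed.
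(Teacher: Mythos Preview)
Your plan is correct and matches the paper's (one-line) proof: expand $F_{ij}=2(f_i-f_j)$, use the $1+1-2=0$ cancellation plus Lemma~\ref{lem:sum-steps} for the rank-one block, and use the Pell identity $2\rho^k+\rho^{k-1}=\rho^{k+1}$ for the sparse block; for the gluing error, apply the $F$-part of the inductive hypothesis~\eqref{eq:cert:n} to each copy of $\sigma$ exactly as you describe.

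Two small warnings so you do not chase phantoms. First, the ``genuine computation'' you flag at the end is not actually there: the $f^*$-coefficient of the gluing error is simply
\[
\frac{1}{r_{k+1}}-\frac{1}{r_k}-\frac{1+2\rho}{r_k}=\frac{1}{r_{k+1}}-\frac{2+2\rho}{r_k},
\]
and that \emph{is} $e_3$; no recursion on $r_k$ is needed for this lemma (that recursion enters only in the quadratic-form check $E-S-L=0$). The printed $e_3=\tfrac{1}{r_{k+1}}-\tfrac{4\rho}{r_k}$ in Appendix~\ref{app:succinct-linear} appears to be a typo (note it fails the sanity check $e_1+e_2+e_3=0$), with the same typo in $\ell_3$ so that $e-s-\ell=0$ still holds. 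Second, your computed rank-one vector $2\rho(\rho^k-1)[1,1,-2]^T$ is correct, but it matches what the appendix labels $s$, not $\ell$: the appendix has the labels $s$ and $\ell$ swapped relative to the lemma statement. None of this affects your argument.
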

\begin{proof}
	Expand the definition of co-coercivities, simplify the rank-one correction using Lemma~\ref{lem:sum-steps}, and simplify the sparse correction using the Pell recurrence $\rho^{k+1} = 2\rho^k + \rho^{k-1}$.
\end{proof}

\begin{lemma}[Succinct quadratic forms]\label{lem:succinct} Let $v := [x_n,g_n,x_{2n+1},g_{2n+1}]^T$, and let $E$, $S$, $L$ be the $4 \times 4$ matrices defined in Appendix~\ref{app:succinct}.
	\begin{itemize}
		\item \underline{Gluing error.} 
		$\|x_0\|^2 - \|x_{2n+1} - c_{k+1} g_{2n+1}\|^2
		- \sum_{ij} \Theta_{ij} P_{ij} = \langle E, vv^T \rangle$
		\item \underline{Sparse correction.} $\sum_{ij} \Delta_{ij} P_{ij} = \langle S, vv^T \rangle$
		\item \underline{Rank-one correction.} $\sum_{ij} \Xi_{ij} P_{ij} = \langle L, vv^T \rangle$
	\end{itemize}
\end{lemma}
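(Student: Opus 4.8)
\textbf{Proof proposal for Lemma~\ref{lem:succinct} (succinct quadratic forms).}

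The plan is to mirror the strategy used for the linear forms in Lemma~\ref{lem:succinct-linear}, but now tracking the quadratic part $P_{ij} = 2\langle g_j, x_j - x_i\rangle - \|g_i - g_j\|^2$ of each co-coercivity. The key structural fact to exploit is that, although the certificate~\eqref{eq:cert:2n+1} a priori involves all $2n+3$ iterates and gradients, the recursive construction forces almost all of these dependencies to cancel, leaving only the ``interface'' variables $x_n, g_n, x_{2n+1}, g_{2n+1}$ (recall $x^* = 0$ by the normalization). So the first step is to substitute the GD identity $x_t = x_0 - \sum_{s=0}^{t-1}\alpha_s g_s$ everywhere and re-express each $P_{ij}$ in terms of the $g_s$'s (and $x_0$, itself a linear combination of the $g_s$'s via $x_{2n+1}$ or directly), then collect coefficients.

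First I would handle the gluing component $\Theta$. By~\eqref{eq:Theta}, $\sum_{ij}\Theta_{ij}P_{ij}$ splits into two blocks. The first block, $\sum_{i,j\in\{0,\dots,n,*\}}\sigma_{ij}P_{ij}$, is exactly the quadratic part of the $n$-step certificate~\eqref{eq:cert:n}, so it equals $\|x_0\|^2 - \|x_n - c_k g_n\|^2$ up to the linear-in-$f$ terms already accounted for; this is where the inductive hypothesis enters. The second block is a scaled copy $(1+2\rho)\sum \sigma_{i-n-1,j-n-1}P_{ij}$ over indices $\{n+1,\dots,2n+1,*\}$; here the subtlety is that the ``shifted'' certificate is being applied to a GD trajectory that does \emph{not} start at the origin — it starts at $x_{n+1} = x_n - \alpha_n g_n$ — so I would invoke translation-invariance of the $P_{ij}$ form (it depends only on differences $x_j - x_i$ and on gradients) to conclude this block equals $(1+2\rho)(\|x_{n+1} - x^*_{\text{shift}}\|^2 - \|x_{2n+1} - c_k g_{2n+1}\|^2)$ for the appropriate shifted reference point, again modulo the already-handled $f$-terms. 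Subtracting these from $\|x_0\|^2 - \|x_{2n+1} - c_{k+1}g_{2n+1}\|^2$ and simplifying using $r_{k+1}, c_{k+1}$ and the Pell recurrence $\rho^{k+1} = 2\rho^k + \rho^{k-1}$ should leave a quadratic form supported only on $v = [x_n, g_n, x_{2n+1}, g_{2n+1}]^T$; that form is $E$. The rank-one correction $\Xi$ from~\eqref{eq:Xi} is the easiest: it is literally a rank-one outer product $\rho[1,1,-2]^T[\alpha_{n+1},\dots,\alpha_{2n}]$, and summing $\Xi_{ij}P_{ij}$ against it produces $\sum_t \alpha_t$-weighted combinations that collapse via Lemma~\ref{lem:sum-steps} (with the relevant partial sum being $\rho^k - 1$ over the last block) and the identity $x_{2n+1} = x_n - \sum_{t=n+1}^{2n+1}\alpha_{t-1}g_{t-1}$; this yields $L$. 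Finally $\Delta$ from~\eqref{eq:Del} is supported on the $3\times 3$ block indexed by $\{n, 2n+1, *\}$, so $\sum\Delta_{ij}P_{ij}$ is manifestly a quadratic form in $x_n, g_n, x_{2n+1}, g_{2n+1}$ already — just expand the six nonzero $P_{ij}$ terms and read off $S$.

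The main obstacle I expect is the bookkeeping in the $\Theta$ step: correctly tracking how the \emph{shifted} inner certificate (applied over indices $n+1,\dots,2n+1$) interacts with the GD dynamics started at $x_{n+1}$ rather than at $0$, and verifying that all the ``bulk'' gradient terms $g_1,\dots,g_{n-1}$ and $g_{n+1},\dots,g_{2n}$ genuinely cancel between the two blocks and the $\|x_0\|^2$, $\|x_{2n+1} - c_{k+1}g_{2n+1}\|^2$ terms — leaving a form in only the four interface variables. This cancellation is exactly the ``long-range consistency'' phenomenon that makes recursive gluing work, and it is precisely analogous to the argument in Part~I~\citep{alt23hedging1}; the non-strongly-convex case should be cleaner because the schedule is non-periodic and shorter schedules are prefixes of longer ones (Remark~\ref{rem:steps-limit}), so I would model the argument closely on that source. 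Once the reduction to the $4\times 4$ forms $E, S, L$ is established, the remaining content (that $E + S + L$ together with the linear forms $e+s+\ell$ certifies a valid PSD combination) is deferred to \S\ref{ssec:gluing:pf}, so this lemma itself terminates at the identification of $E$, $S$, $L$, which is a finite computation recorded in Appendix~\ref{app:succinct}.
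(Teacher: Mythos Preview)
Your approach is essentially the paper's: apply the $n$-step certificate to each half of $\Theta$ via the inductive hypothesis, expand $\Delta$ directly on its $3\times 3$ support, and collapse $\Xi$ using the GD identity $x_{2n+1}-x_{n+1}=-\sum_{t=n+1}^{2n}\alpha_t g_t$ together with Lemma~\ref{lem:sum-steps}. One small clarification on the gluing step: there is no shifted reference point $x^*_{\text{shift}}$ and no need to invoke translation-invariance of $P_{ij}$---the $n$-step certificate~\eqref{eq:cert:n} holds for \emph{any} initialization with the \emph{same} $x^*=0$, so applying it to the trajectory $x_{n+1},\dots,x_{2n+1}$ (which uses the same stepsizes $h_n$ by~\eqref{eq:steps-recursive}) directly gives $\|x_{n+1}\|^2-\|x_{2n+1}-c_kg_{2n+1}\|^2$; the ``bulk'' gradients $g_1,\dots,g_{n-1}$ and $g_{n+2},\dots,g_{2n}$ disappear \emph{within} each block by the inductive hypothesis, not by cancellation between blocks.
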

\begin{proof}
	Deferred to Appendix~\ref{app:succinct} for brevity.
\end{proof}

\subsection{Proof of recursive gluing (Theorem~\ref{thm:cert})}\label{ssec:gluing:pf}

\paragraph*{Non-negativity.} We verify $\lam_{ij} \geq 0$ for all $i \neq j \in \{0, \dots, 2n+1,*\}$. For nearly all entries, this is obvious since $\lam_{ij}$ is constructed by adding and multiplying non-negative numbers. For the remaining entries where either $\Xi$ or $\Delta$ is negative, compute the corresponding entry of $\lam$ by summing the corrections and using Lemma~\ref{lem:opt}. This gives $\lam_{n*} = 0$, $\lam_{2n+1,*} = \rho^{k+1} - 1$, $\lam_{*,n} = \rho^{k-1} + 1$, $\lam_{*,2n+1} = \tfrac{1}{2r_{k+1}}$, and $\lam_{*,t} = \alpha_t$ for all $t \in \{n+1,\dots,2n\}$. All these entries are clearly non-negative.

\paragraph*{Rate certificate.} The identity~\eqref{eq:cert:2n+1} has two components: a linear form in the function values and a quadratic form in the iterates and gradients. For the linear form, it suffices to verify $e-s-\ell = 0$ by Lemma~\ref{lem:succinct-linear}, where $e,s,\ell \in \R^3$ are the vectors defined in Appendix~\ref{app:succinct-linear}. This is obvious by inspection. For the quadratic form, it suffices to verify $E - S - L = 0$, where $E,S,L \in \R^{4 \times 4}$ are the matrices in Lemma~\ref{lem:succinct} defined in Appendix~\ref{app:succinct}. By plugging in the explicit values for $r_k$ and $\Delta$, this is straightforward to check by hand. For brevity, we provide a simple Mathematica script that verifies these identities at the URL~\citep{MathematicaURL}.

	 \appendix
	 
\section{Deferred proof details}\label{app:deferred}

\subsection{Succinct linear forms (Lemma~\ref{lem:succinct-linear})}\label{app:succinct-linear}

The vectors $e, s, \ell$ in Lemma~\ref{lem:succinct-linear} are defined as follows:
\begin{align*}
	\small
	e := \begin{bmatrix} 
		\frac{1}{r_k} \\
		-\frac{1}{r_{k+1}} + \frac{1+2\rho}{r_k} \\
		\frac{1}{r_{k+1}} - \frac{4\rho}{r_k}
	\end{bmatrix}
	\,,
	\qquad
	s := 2\rho(\rho^{k} - 1) \begin{bmatrix}
		1 \\
		1 \\
		-2
	\end{bmatrix}
	\,,
	\qquad
	\ell := \begin{bmatrix}
		\frac{1}{r_k} - 2\rho(\rho^k - 1)\\
		-\frac{1}{r_{k+1}} + \frac{1+2\rho}{r_k} - 2\rho(\rho^k - 1) \\
		\frac{1}{r_{k+1}} - \frac{4\rho}{r_k} + 4\rho(\rho^k - 1)
	\end{bmatrix}\,.
\end{align*}

\subsection{Succinct quadratic forms (Lemma~\ref{lem:succinct})}\label{app:succinct}

The matrices in Lemma~\ref{lem:succinct} are defined as follows. For brevity, we write $\sim$ for the entries below the diagonal since these coefficient matrices are symmetric.
\begin{align*}
	\small
	E :=
	\begin{bmatrix}
		-2\rho
		&
		(1 + 2\rho) (1 + \rho^{k-1}) -\frac{1}{2r_k} 
		&
		0
		&
		0
		\\
		\sim
		&
		\frac{1}{4r_k^2} - (1 + 2\rho) (1+\rho^{k-1})^2
		&
		0
		&
		0 
		\\
		\sim
		&
		\sim
		&
		2\rho 
		&
		\frac{1}{2r_{k+1}} - \frac{1+2\rho}{2r_k}
		\\
		\sim & \sim & \sim &
		\frac{1+2\rho}{4r_k^2} - \frac{1}{4r_{k+1}^2}
	\end{bmatrix}
\end{align*}
\begin{align*}
	\small
	S := 
	\begin{bmatrix}
		0
		&
		\Delta_{2n+1,n} + \Delta_{*,n}
		&
		0
		&
		-\Delta_{n,2n+1}
		\\
		\sim
		&
		-\left( \Delta_{n,*} + \Delta_{*,n} + \Delta_{2n+1,n} + \Delta_{n,2n+1} \right)
		&
		-\Delta_{2n+1,n}
		&
		\Delta_{n,2n+1} + \Delta_{2n+1,n}
		\\
		\sim
		&
		\sim
		&
		0
		&
		\Delta_{n,2n+1} + \Delta_{*,2n+1}
		\\
		\sim
		&
		\sim
		&
		\sim
		&
		- \left( \Delta_{n,2n+1} + \Delta_{2n+1,n}  + \Delta_{2n+1,*} + \Delta_{*,2n+1}\right)
	\end{bmatrix}
\end{align*}
\begin{align*}
	\small
	L :=  \rho \begin{bmatrix}
		-2
		&
		2 + \rho^{k-1} 
		&
		0
		&
		1
		\\
		\sim
		&
		-1 - \rho^k - 2 \rho^{k-1} 
		&
		\rho^{k-1}
		&
		- 1 - \rho^{k-1}
		\\
		\sim
		&
		\sim
		&
		2
		&
		-1
		\\
		\sim
		&
		\sim
		&
		\sim
		&
		1 - \rho^k
	\end{bmatrix}
\end{align*}

\begin{proof}[Proof of Lemma~\ref{lem:succinct}]
	\underline{Gluing component.} 
	Recall that $h_{2n+1} = [h_n, \alpha_n, h_n]$ by the recursive construction~\eqref{eq:steps-recursive} of the Silver Stepsize Schedule. Thus, since $\sig$ certifies the $n$-step rate, we have
	\begin{align*}
		\sum_{i \neq j \in \{0, \dots, n,*\}} \sig_{ij} P_{ij} 
		&=
		\|x_0\|^2 - \|x_n - c_k g_n\|^2
		\\ 
		\sum_{i \neq j \in \{n+1, \dots, 2n,*\}} \sig_{ij} P_{ij} 
		&=
		\|x_{n+1}\|^2 - \|x_{2n+1} - c_k g_{2n+1}\|^2
	\end{align*}
	By definition, $\sum_{ij} \Theta_{ij} P_{ij}$ is the former plus $(1 + 2\rho)$ times the latter. 
	Thus the gluing error $\|x_0\|^2 - \|x_{2n+1} - c_{k+1} g_{2n+1}\|^2 - \sum_{ij} \Theta_{ij} P_{ij}$ is equal to
	\begin{align*}
		(1+2\rho) \|x_{2n+1} - c_k g_{2n+1}\|^2 - \|x_{2n+1} - c_{k+1} g_{2n+1}\|^2 + \|x_n - c_k g_n\|^2 - (1 + 2\rho) \|x_{n+1}\|^2
	\end{align*}
	The key point is that after 
	expanding $x_{n+1} = x_n - \alpha_n g_n$, this is a quadratic form in only the $4$ variables $x_n, g_n, x_{2n+1},g_{2n+1}$. Collecting terms, substituting $c_k = \tfrac{1}{2r_k}$, and using the definition of the Silver Stepsize $\alpha_n = 1 + \rho^{k-1}$ yields $\langle E, vv^T \rangle$, as desired.
	
	\par \underline{Sparse component.} Expand the definition of the co-coercivity and collect terms. Note also that this equals the matrix $S$ in Part~I~\citep{alt23hedging1}, in the limit that the strong convexity tends to $0$.
	
	\par \underline{Low-rank component.}  Expanding the co-coercivities, using the identity $x_{2n+1} - x_{n+1}= - \sum_{t=n+1}^{2n} \alpha_t g_t$ by definition of GD, and using the identity $\sum_{t=n+1}^{2n} \alpha_t = \sum_{t=0}^{n-1} \alpha_t = \rho^k - 1$ from Lemma~\ref{lem:sum-steps}, 
	\begin{align*}
		\sum_{t=n+1}^{2n} \Big( \Xi_{n,t} P_{n,t} - \Xi_{*,t} P_{*,t} \Big)
		=
		\rho \sum_{t=n+1}^{2n} \alpha_t \Big(
		2 \langle g_t, g_n - x_n \rangle - \|g_n\|^2
		\Big)
		=
		2\rho \langle x_{2n+1} - x_{n+1}, x_n - g_n \rangle - \rho(\rho^k - 1 ) \|g_n\|^2 \,.
	\end{align*}
	An identical calculation yields
	\begin{align*}
		\sum_{t=n+1}^{2n} \Big( \Xi_{2n+1,t} P_{2n+1,t} - \Xi_{*,t} P_{*,t} \Big)
		&=
		2\rho \langle x_{2n+1} - x_{n+1}, x_{2n+1} - g_{2n+1} \rangle - \rho(\rho^k - 1 ) \|g_{2n+1}\|^2 \,.
	\end{align*}
	Now sum these two displays, use the definition of GD to expand $x_{n+1} = x_n - \alpha_n g_n$, and use the definition of the Silver Stepsize $\alpha_n = 1 + \rho^{k-1}$. This gives $\langle L, vv^T \rangle$, as desired.
\end{proof}

	 \footnotesize
	 \addcontentsline{toc}{section}{References}
	 \bibliographystyle{plainnat}
	 \bibliography{hedging}{}

\end{document}